\title{Weak Infeasibility in Second Order Cone Programming \\ \ }
\newtheorem{definition}{Definition}
\newtheorem{lemma}[definition]{Lemma}
\newtheorem{proposition}[definition]{Proposition}
\newtheorem{example}[definition]{Example}
\newtheorem{theorem}[definition]{Theorem}
\newcommand{\SOC}[1]{{\mathcal{K}^{#1}}}
\newcommand{\norm}[1]{\lVert{#1}\rVert}
\newcommand{\reInt}{\mathrm{ri}\,}
\newcommand{\reCone}{\mathrm{rec}\,}
\newcommand{\lineality}{\mathrm{lin}\,}
\newcommand{\relBd}{\mathrm{relbd}\,}
\newcommand{\halfLine}{\mathrm{h}\,}
\newcommand{\inProd}[2]{\langle #1 , #2 \rangle }
\newcommand{\matRange}{{\mathrm{ range } \,}}	
\newcommand{\stdMap}{ {\mathcal{A}}}
\newcommand{\stdCone}{ {K}}
\newcommand{\pOpt}{ {\theta _P}}
\newcommand{\dOpt}{ {\theta _D}}
\newcommand{\dOptRed}{ {{\widehat \theta} _D}}
\newcommand{\dOptAlt}{ {{\theta} _{D'}}}
\newcommand{\minFace}{ {\mathcal{F}_{\min}}}
\renewcommand{\Re}{\mathbb{R}}    
\author{Bruno F. Louren\c{c}o
\thanks{
Department of Mathematical and Computing Sciences,
Tokyo Institute of Technology,
2-12-1-W8-41 Ookayama, Meguro-ku, Tokyo 152-8552, Japan. (E-mail: flourenco.b.aa@m.titech.ac.jp)}
        \and
        Masakazu Muramatsu\thanks{
                     Department of Computer Science, The University of Electro-Communications 1-5-1 Chofugaoka, Chofu-shi, Tokyo, 182-8585 Japan. (E-mail: muramatu@cs.uec.ac.jp)
                  }
         \and       
                Takashi Tsuchiya
\thanks{
National Graduate Institute for Policy Studies 7-22-1 Roppongi, Minato-ku, Tokyo 106-8677, Japan. (E-mail: tsuchiya@grips.ac.jp) \newline M. Muramatsu and T. Tsuchiya are supported in part with Grant-in-Aid for Scientific Research (B)24310112 and 
(C) 26330025. M. Muramatsu is  also partially supported by the
Grant-in-Aid for Scientific Research (B)26280005. T. Tsuchiya is also partially supported by the Grant-in-Aid for Scientific Research (B)15H02968.
                  }
        }
\begin{document}
\date{February 2015 (Revised: September 2015) }
\renewcommand{\topfraction}{0.9}
\maketitle{}
\begin{abstract}
The objective of this work is to study weak infeasibility in second order 
cone programming.
For this purpose, we consider a relaxation sequence of feasibility problems that mostly preserve
the feasibility status of the original problem. This is used to show that for a given 
weakly infeasible problem at most $m$ directions are needed to approach the cone, where 
$m$ is the number of Lorentz cones. We also tackle a closely related question and 
show that given a bounded optimization problem satisfying Slater's condition, 
we may transform it into another problem that has the same optimal value but it is ensured to attain it. From solutions 
to the new problem, we discuss how to obtain solution to the original problem which are arbitrarily close to optimality.
Finally, we discuss how to obtain finite certificate of weak infeasibility by combining our own techniques 
with facial reduction.
The analysis is similar in spirit to previous work by the authors on SDPs, but a different 
approach is required to obtain tighter bounds.  
\newline
\noindent \textbf{Keywords:} weak infeasibility \and second order cone programming \and feasibility problem.
\end{abstract}
\section{Introduction}
Second order cone programming is an important class of conic linear programming with many applications \cite{Lobo1998193}.
The problem is solved efficiently with interior-point algorithms \cite{NesterovNemirovskii,Tsuchiya99,Monteiro_Tsuchiya} as 
long as regularity conditions are satisfied. 
In this paper, we deal with the issue of weak infeasibility in second order cone 
programming (SOCP). Our starting point is the feasibility problem
\begin{equation*}
\text{find } x, \text{ such that }  x \in 	\stdCone\cap (L+c)\tag{$\mathcal{F}$}\label{socp_primal},
\end{equation*}
where $c \in \Re^n$ and $L$ is a linear subspace of $\Re^n$ and $\stdCone$ is a closed 
convex cone. When 
$\stdCone$ is a product of Lorentz cones, this is the \emph{second order cone feasibility problem} (SOCFP).
$(\stdCone ,L,c)$ will be used as shorthand for the feasibility problem \eqref{socp_primal}. 
We say that $(\stdCone ,L,c)$ is: $(i)$ \emph{strongly feasible} when 
$(L+c) \cap \reInt K \neq \emptyset$, where $\reInt$ denotes the relative interior;  $(ii)$ \emph{weakly feasible}, when $K\cap (L+c) \neq \emptyset$ but 
$(L+c) \cap \reInt K = \emptyset$; $(iii)$ \emph{weakly infeasible}, when $K\cap (L+c) = \emptyset$ but the 
distance between $\stdCone$ and $L+c$ is $0$; $(iv)$ \emph{strongly infeasible}, when $K\cap (L+c) = \emptyset$ and the distance between $L+c$ and $\stdCone$ is greater than $0$.

A major difficulty in identifying the feasibility status is the existence of weak infeasibility, since weak infeasibility
does not admit an apparent finite certificate.
A natural 
certificate of weak infeasibility is an infinite sequence $u^{(k)}$ such that 
$u^{(k)}\in L+c$ and $\lim_{k\rightarrow\infty} {\rm dist}(u^{(k)}, \stdCone) = 0$, together with 
some certificate of the infeasibility of $(\stdCone,L,c)$.  The sequence
$\{u^{(k)}\}$ is referred to as weakly infeasible sequence in this paper.

Weak infeasibility of conic programs is mainly discussed in the context of duality theory of conic programs
and regularization of ill-conditioned conic programs, e.g. \cite{Luo97dualityresults,klep_exact_2013,pataki_strong_2013,waki_how_2012}. 
In addition, it is closely related to the issue of closedness of image of closed convex cones, e.g., \cite{pataki_closedness_2007,borwein_closedness_2009}.
See \cite{lourenco_muramatsu_tsuchiya} for a more detailed review about this issue.

There is an alternative way to certificate weak infeasibility as follows.
A weakly infeasible problem is characterized as one that is infeasible but not strongly infeasible.
It is known that a system \emph{is} strongly infeasible iff the system has a dual improving direction, see Table 1 in \cite{Luo97dualityresults}.
Therefore, checking infeasibility of the original system and checking nonexistence of a dual improving direction correspond 
to two conic feasibility problems, both of which can be solved with the facial reduction algorithm. In this 
way we can detect weak infeasibility without relying on an infinite sequence. 
However, this approach is not direct in the sense that even if we know that the system is weakly infeasible,
it is not clear how to construct a weakly infeasible sequence.

In this paper, we develop a procedure of detecting weak infeasibility which enable us to construct 
a weakly infeasible sequence.
Specifically, we generate a set of at most $m$ directions and show that we are able to construct a weakly infeasible
sequence with these directions. 
A possible application of this result is to the analysis of SOCP with unattained optimal value.
Knowing the optimal value, we are able to generate an approximate optimal solution whose objective value is arbitrarily 
close to the (unattained) optimal value without solving SOCPs repeatedly.

We have three main contributions in this paper. First,  we develop a way of constructing weakly infeasible sequence and 
show that for weakly infeasible problems over a product of $m$ Lorentz cones, at most $m$ directions are needed to approach 
the second order cone. 
We will describe in Section \ref{sec:min_d} the precise meaning of that, but 
we note already that this is tighter than a recent bound obtained by Liu and Pataki \cite{liu_pataki_2015} for general 
linear conic programs. 
Another new contribution is to show how strongly feasible optimization problems 
can be further regularized in order to ensure that the optimal value is attained. There is also 
discussion on how to obtain points that are arbitrarily close to optimality for the original 
SOCP.
Finally, we discuss how to distinguish between the four different feasibility statuses, strong feasibility, weak feasibility, 
weak infeasibility and strong infeasibility without requiring any regularity condition.

The main tool we use is the set of directions $\{d^1, \ldots, d^\gamma \}$ contained in $L$ 
which are obtained through the application of  facial reduction to $(\stdCone ^*, L^\perp, 0)$ in order 
to obtain the relative interior of the feasible region of the dual system $\stdCone ^*\cap L^\perp$. 
In facial reduction theory, these directions correspond to a family of hyperplanes 
$ \{ \{d^1\}^\perp, \ldots, \{d^\gamma\}^\perp\}$ which contain  $\stdCone ^*\cap L^\perp$. As such, 
this gives an interpretation of these directions from the point of view of the dual problem.  
What is novel about our analysis is proving that these directions have other useful ``primal''
properties besides what is currently known through facial reduction theory. 
For instance, the relaxed problems induced by them have almost the same 
feasibility status as the original problem. For  weakly infeasible problems, these directions 
are useful to generate points that are arbitrarily close to the cone. This can be applied 
to strongly feasible problems with unattained optimal value, thus enabling the 
algorithmic  generation of feasible points that are close to optimality. Moreover, our sequence 
of directions is likely to be shorter than what would be otherwise obtained through plain facial reduction, 
since we show that is enough to focus on the nonlinear part of the cone.

This work is organized as follows. Section \ref{sec:notation} describes the notation 
and the setting of this work. Section \ref{sec:relaxation} discusses how to relax 
a SOCFP in a way that the feasibility properties are mostly preserved. Section \ref{sec:min_d} discussed the 
minimal number of directions needed to approach the second order cone. Section \ref{sec:unattained} contains 
a discussion on unattained strongly feasible problems and how to regularize for attainment. 
Section \ref{sec:back} describes a
theoretical recipe to distinguish the four feasibility statuses. Section \ref{sec:conc} contains a brief 
summary of this work.

\section{Notation and preliminary considerations}\label{sec:notation}
For $d \in \Re^n$, we define the closed half-space $H_d^n = \{ x \in \Re^n \mid d^Tx \geq 0 \}$
and the ray $\halfLine_d^n = \{\alpha d \in \Re^n \mid \alpha \geq 0 \}$.
We also write $x = (x_{0}, \ldots , x_{n-1})$ for 
the components of $x$. We use the notation $\overline{x}$ to denote the last 
$(n-1)$ components of $x$, i.e., $\overline{x} = (x_{1}, \ldots, x_{(n-1)})$.
The Lorentz cone in $\Re^{n}$ is denoted by 
$\SOC{n}$, i.e., $\SOC{n} = \{ x \in \Re^{n} \mid x_{0} \geq \norm{\overline{x}} \}$,
where $\norm{.}$ is the usual Euclidean norm. We remark that 
$\SOC{1} = \{x \in \Re \mid x \geq 0 \}$, so the non-negative 
orthant in $\Re^n$ can be written as a direct product of one-dimensional Lorentz cones.
If $x \in \SOC{n}$, we write $x'$ for the reflection of 
$x$ with respect to $\SOC{n}$, i.e., $x' = ( x_0, - \overline{x})$.

Our main object of study is the feasibility problem \eqref{socp_primal}, where 
$\stdCone$ is a direct product of Lorentz cones. 
We will also consider problems where $\stdCone$ also includes closed half-spaces, rays and 
subspaces. We have $\stdCone = K^{n_1} \times \ldots \times K^{n_m}$, where 
$K^{n_i} \subseteq \Re^{n_i}$ for every $i$ and $n_{1} + \ldots + n_{m} = n $.
The cone $\stdCone$ induces a block division such that for $x \in \Re^n$  we have $x \in 	\stdCone $ if 
and only if $x_{n_i} \in K^{n_{i}}$, for every $i$. 
Throughout the article we use the convention that a superscript over a set indicates 
the dimension of ambient space. For example, $\SOC{n}, H_d^n, \halfLine_d^n$ are all 
sets contained in $\Re^n$.
A single subscript under a point denotes a coordinate and double subscript 
denotes a block. For example $x_i$ is the $i$-th coordinate of $x$, while $x_{n_i}$ is 
the $i$-th block of $x$. Of course, it is implicitly understood that the division 
in blocks is induced by some cone $\stdCone \subseteq \Re^n$.  
 
Finally, we use $\reInt C$, $\reCone C$, $\lineality C$ and $\relBd C$ to denote 
the relative interior, recession cone, lineality space and relative boundary of $C$, respectively. We denote 
the dual cone of $\stdCone$ by $\stdCone^* = \{s \in \Re^n \mid s^Tx \geq 0, \forall x \in K \}$. See 
\cite{rockafellar} for basic properties of those sets.

Two convex sets $C_1,C_2 \subseteq \Re^n$ are said to be \emph{properly separated} when there is 
a hyperplane such that $C_1$ and $C_2$ lie at opposite closed half-spaces but at least 
one of them is not entirely contained in the hyperplane. A necessary and sufficient 
condition for proper separation is that $\reInt C_1 \cap \reInt C_2 = \emptyset$, see 
Theorem 11.3 of \cite{rockafellar}.
The separation is said to be \emph{strong} if there is a ball $B$ centered in the 
origin such that $C_1 +B$ and $C_2 + B$ lie at opposite open half-spaces. A necessary 
and sufficient conditions for strong separation is $\text{dist}(C_1,C_2) > 0$, see 
Theorem 11.6 of \cite{rockafellar}.

Note that strong feasibility admits an obvious certificate, since it is enough 
to obtain an element $ x \in \reInt \stdCone\cap (L+c) $. 
From these basic facts about separating hyperplanes we can characterize two of 
the remaining feasibility statuses. We have that $(K,L,c)$ is: $i)$ weakly feasible if and only if  (iff)
there is $ x \in \stdCone \cap (L+c)$ and $w \in (\stdCone ^*\setminus \stdCone^\perp)\cap L^\perp \cap \{c\}^\perp$, 
where $A^\perp$ indicates the set of elements orthogonal to a set $A$,   
$ii)$ strongly infeasible iff there is $w \in \stdCone^*\cap L^\perp$ with $w^Tc = -1$.
No such simple characterization is known for weak infeasibility, and this can be traced 
down to its asymptotic nature; in spite of the absence of 
a feasible solution, the distance between $\stdCone$ and $L+c$ is $0$. Hence, there must be some sequence $\{x^k\}$ contained in $L+c$ satisfying $\lim _{\stdCone  \to \infty} \text{dist}(x^k,\stdCone) = 0$.
However, such a sequence cannot have any converging subsequence, so $\norm{x^k} \to +\infty$.

Many of the known characterizations of weak infeasibility involve, in a way or another, 
infinite sequences (see Table 1 of Luo, Sturm and Zhang \cite{Luo96dualityand}). 
As a computer cannot verify infinite sequences, it is very hard to  distinguish numerically between weak infeasibility 
and weak feasibility,  see, for instance, P\'olik and Terlaky   \cite{polik_new_2009}. This motivates the search 
for ways of checking infeasibility without using sequences as in the recent work for semidefinite 
programming by Liu and Pataki \cite{liu_pataki_2014}, see Theorem 1 therein. See also Section 4.3 of \cite{klep_exact_2013} by Klep and Schweighofer. 
These characterizations are \emph{finite} and no infinite sequences are needed.
In Section \ref{sec:back}, we will 
show that distinguishing weak feasibility/infeasibility for SOCFPs can also be performed  without using 
sequences.

For convenience, 
we group together weak feasibility and weak infeasibility in a 
single status: \emph{weak status}. We say the feasibility status of feasibility problems $\mathcal{A}$ and $\mathcal{B}$ are ``mostly the same'' if
  $\mathcal{A}$ and $\mathcal{B}$ are  both  strongly feasible, strongly infeasible
or in weak status.  Note that it is possible that $\mathcal{A}$ is weakly infeasible and $\mathcal{B}$ is weakly feasible (or vice-versa).

\section{Relaxation of SOCFPs}\label{sec:relaxation}
In this section, we show how SOCFPs can be relaxed in a way that the feasibility 
properties are mostly preserved.
Consider a feasibility problem of the form $(\stdCone ,L,c)$, where $\stdCone$ is a direct product 
$K^{n_1} \times \ldots \times K^{n_m}$, where each $K^{n_i}$ is: the trivial cone $\{0\}$;
$\Re^{n_i}$; a second order cone $\SOC{n_i}$; a closed half-space defined by a supporting hyperplane to 
	$\SOC{n_i}$, i.e., $H_d^{n_i}$ for $d \in \SOC{n_i}\setminus \{0\}$; or
a half-line contained in $\SOC{n_i}$, i.e., $\halfLine_d^{n_i}$ for 
	$d \in \SOC{n_i}$.

Note that the family of  cones having the format above is  no more expressive 
than the family of products of second order cones. Still, for our purposes we need 
to consider this slightly more general situation because these cones will 
appear as  byproducts of Theorem \ref{theorem_reduction}. We will call them \emph{extended 
second order cones}. We remark that the 
dual cone $K^*$ is the direct product of the duals of the cones $K^{n_i}$ and it is also 
an extended second order cone. It is 
also clear that we have $(H_d^{n_i})^*   = \halfLine_d^{n_i}$.

Suppose that we have a non-zero element $a \in K$, then 
we  define: $i)\, \mathcal{H}_{1}(a,K) = \{i \mid K^{n_{i}} = \SOC{n_{i}},  a _{n_{i}} \in \reInt \SOC{n_{i}} \}$ and 
$ii)\, \mathcal{H}_{2}(a,K) = \{i \mid K^{n_{i}} = \SOC{n_{i}},  a _{n_{i}} \in (\relBd \SOC{n_{i}})\setminus \{0\}\}$.
We will omit $\stdCone$ when it is clear from the context.

\begin{lemma}\label{lemma_recession}
Let $x \in \Re^n$ and $a \in \SOC{n}$ be such that $x^Ta' > 0$. Then
$x + ta \in \reInt \SOC{n}$ for $t > 0$ sufficiently large. 
\end{lemma}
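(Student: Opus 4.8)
The plan is to show directly that $x+ta$ lies in the interior of $\SOC{n}$ for all large $t$ by verifying the strict inequality $(x_0 + ta_0) > \norm{\overline{x} + t\overline{a}}$. First I would observe that the hypothesis $x^Ta' > 0$ unpacks, by the definition $a' = (a_0, -\overline a)$, into the inequality $x_0 a_0 - \overline x^T \overline a > 0$, i.e.
\[
x_0 a_0 > \overline{x}^T \overline{a}.
\]
Since $a \in \SOC{n}$ we have $a_0 \geq \norm{\overline a}$, and note this forces $a_0 > 0$ (if $a_0 = 0$ then $a = 0$, contradicting $x^Ta' > 0$).

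Next I would compare the squares of the two sides of the target inequality. We have
\[
(x_0 + t a_0)^2 - \norm{\overline x + t \overline a}^2
= (x_0^2 - \norm{\overline x}^2) + 2t(x_0 a_0 - \overline x^T \overline a) + t^2(a_0^2 - \norm{\overline a}^2).
\]
The coefficient of $t$ is strictly positive by hypothesis, the $t^2$ coefficient is nonnegative since $a \in \SOC{n}$, and the constant term is a fixed real number. Hence the right-hand side is an affine or convex quadratic in $t$ with positive linear coefficient, so it tends to $+\infty$ and in particular is strictly positive for all $t$ larger than some threshold $t_0$. For such $t$ we also have $x_0 + t a_0 > 0$ (again because $a_0 > 0$), so from $(x_0 + ta_0)^2 > \norm{\overline x + t \overline a}^2$ we may take positive square roots to conclude $x_0 + t a_0 > \norm{\overline x + t\overline a}$, which is exactly $x + ta \in \reInt \SOC{n}$.

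The only mildly delicate points are bookkeeping ones: ensuring $a_0 \neq 0$ so that the dominant behavior in $t$ is controlled, and making sure that when we pass from the squared inequality back to the unsquared one we know the left-hand side $x_0 + ta_0$ is positive (otherwise $A^2 > B^2$ does not give $A > B$). Both are immediate once we note $a_0 > 0$. There is no real obstacle here; the lemma is essentially a one-variable limit argument, and the content is just that the linear-in-$t$ term, whose sign is governed by $x^Ta'$, eventually dominates the bounded constant term while the quadratic term never hurts.
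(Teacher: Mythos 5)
Your proof is correct and follows essentially the same route as the paper: expand $(x_0+ta_0)^2 - \norm{\overline{x}+t\overline{a}}^2$ and note that the linear-in-$t$ coefficient is exactly $x^Ta' > 0$. The only cosmetic difference is that the paper splits off the case $a \in \interior \SOC{n}$ as obvious and does the expansion only for boundary $a$ (where the $t^2$ term vanishes), whereas you treat both cases in one computation and, slightly more carefully, record that $a_0>0$ so the passage from the squared inequality back to $x_0+ta_0 > \norm{\overline{x}+t\overline{a}}$ is legitimate.
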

\begin{proof}
The point $a$ must be non-zero and if it is an interior point, then 
the statement clearly holds. If $a$ lies in the boundary, then
\begin{equation*}
(x + ta)_{0}^{2} - \norm{\overline{x + ta}}^{2}  = 2t(a_{0}x_{0}  - \overline{a}^{T}\overline{x}) + x_{0}^{2} - \norm{\overline{x}}^{2}.
\end{equation*}
However, $a_{0}x_{0}  - \overline{a}^{T}\overline{x}$ is equal to 
$ x^Ta'$. So if $t$ is large enough we have that $(x + ta)_{0}^{2} - \norm{\overline{x + ta}}^{2}$ 
will be greater than $0$.
\end{proof}

\begin{theorem}\label{theorem_reduction}
Let $(\stdCone ,L,c)$ be a feasibility problem such that 
$K = K^{n_1} \times \ldots \times K^{n_m}$. Suppose that 
there is  $a \in K \cap L$ such that $\mathcal{H}_{1}(a) \cup \mathcal{H}_{2}(a)$ is 
non-empty. Define the cone $\widetilde{\stdCone } = \widetilde{\stdCone }^{n_1} \times \ldots \times \widetilde{\stdCone }^{n_m}$ such 
that for every $i$: 
\begin{itemize}
	\item $\widetilde{\stdCone }^{n_i} = \Re^{n_i}$ if $i \in \mathcal{H}_{1}(a)$,
	\item $\widetilde{\stdCone }^{n_i} = H_{d}^{n_i}$ where $d = a_{n_i}'$, if $i \in \mathcal{H}_{2}(a)$,
	\item $\widetilde{\stdCone }^{n_i} = K^{n_i}$, otherwise. 
\end{itemize}
Then 
\begin{enumerate}[i.]
	\item  $(\stdCone ,L,c)$ is strongly feasible if and only if $(\widetilde{\stdCone },L,c)$ is strongly feasible;
	\item  $(\stdCone ,L,c)$ is in weak status if and only if $(\widetilde{\stdCone },L,c)$ is in weak status;
	\item  $(\stdCone ,L,c)$ is strongly infeasible if and only if $(\widetilde{\stdCone },L,c)$ is strongly infeasible.
\end{enumerate}
\end{theorem}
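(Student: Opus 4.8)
The plan is to first observe that the relaxation is genuine and that, thanks to the trichotomy of feasibility statuses, it suffices to prove items (i) and (iii). Indeed $\widetilde{\stdCone}\supseteq\stdCone$, since $\SOC{n_i}\subseteq\Re^{n_i}$ and, for $i\in\mathcal{H}_2(a)$, $\SOC{n_i}\subseteq H_{a_{n_i}'}^{n_i}$ because $a_{n_i}'\in(\SOC{n_i})^*=\SOC{n_i}$; consequently $\widetilde{\stdCone}^*\subseteq\stdCone^*$. Since ``strongly feasible'', ``in weak status'' and ``strongly infeasible'' are mutually exclusive and jointly exhaustive, item (ii) is an automatic consequence of (i) and (iii). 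Throughout I would use that relative interior and dual cone of a product decompose coordinate-blockwise, and the elementary fact that $y^Tz>0$ whenever $y\in\reInt\SOC{n_i}$ and $z\in\SOC{n_i}\setminus\{0\}$.

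For (i), the forward implication is a block-by-block check: if $x\in(L+c)\cap\reInt\stdCone$ then $x_{n_i}\in\reInt K^{n_i}$ for all $i$; for $i\in\mathcal{H}_1(a)$ this lies in $\reInt\Re^{n_i}=\Re^{n_i}$ trivially, for $i\in\mathcal{H}_2(a)$ we get $x_{n_i}^Ta_{n_i}'>0$ (interior point against a nonzero dual vector), i.e. $x_{n_i}\in\reInt H_{a_{n_i}'}^{n_i}$, and the remaining blocks are unchanged, so $x\in(L+c)\cap\reInt\widetilde{\stdCone}$. For the converse I would use the ``shift by $a$'' trick: given $x\in(L+c)\cap\reInt\widetilde{\stdCone}$, the point $x+ta$ lies in $L+c$ for every $t\ge0$ because $a\in L$, and I claim $x+ta\in\reInt\stdCone$ once $t$ is large. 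For $i\notin\mathcal{H}_1(a)\cup\mathcal{H}_2(a)$ we have $x_{n_i}\in\reInt K^{n_i}$ and $a_{n_i}\in K^{n_i}$, hence $x_{n_i}+ta_{n_i}\in\reInt K^{n_i}$ for all $t\ge0$ (Theorem 6.1 of \cite{rockafellar}); for $i\in\mathcal{H}_1(a)$ the estimate $(x_{n_i}+ta_{n_i})_0-\norm{\overline{x_{n_i}+ta_{n_i}}}\ge(x_{n_i,0}-\norm{\overline{x}_{n_i}})+t(a_{n_i,0}-\norm{\overline{a}_{n_i}})\to+\infty$ does the job; and for $i\in\mathcal{H}_2(a)$, since $x_{n_i}\in\reInt H_{a_{n_i}'}^{n_i}$ gives $x_{n_i}^Ta_{n_i}'>0$, Lemma \ref{lemma_recession} applies. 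Taking $t$ large enough to handle the finitely many blocks at once finishes the proof.

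For (iii), the implication ``$(\widetilde{\stdCone},L,c)$ strongly infeasible $\Rightarrow$ $(\stdCone,L,c)$ strongly infeasible'' is immediate from $\stdCone\subseteq\widetilde{\stdCone}$ and monotonicity of distance. The substantive direction is the reverse one, where the key point is that \emph{the same} dual certificate works. Suppose $(\stdCone,L,c)$ is strongly infeasible and pick $w\in\stdCone^*\cap L^\perp$ with $w^Tc=-1$. Because $a\in K\cap L$ and $w\in\stdCone^*\cap L^\perp$, we have $a^Tw\ge0$ and $a^Tw=0$ simultaneously; writing $a^Tw=\sum_i a_{n_i}^Tw_{n_i}$ as a sum of nonnegative terms equal to $0$, each term vanishes: $a_{n_i}^Tw_{n_i}=0$. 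For $i\in\mathcal{H}_1(a)$, $a_{n_i}\in\reInt\SOC{n_i}$ and $w_{n_i}\in\SOC{n_i}$ force $w_{n_i}=0\in(\Re^{n_i})^*$; for $i\in\mathcal{H}_2(a)$, a short computation identifies $\{u\in\SOC{n_i}\mid u^Ta_{n_i}=0\}$ with the ray $\halfLine_{a_{n_i}'}^{n_i}=(H_{a_{n_i}'}^{n_i})^*$, so $w_{n_i}$ lies there; and on the other blocks $w_{n_i}\in(K^{n_i})^*$ already. Hence $w\in\widetilde{\stdCone}^*\cap L^\perp$ still satisfies $w^Tc=-1$, so $(\widetilde{\stdCone},L,c)$ is strongly infeasible by the characterization of strong infeasibility recalled in Section \ref{sec:notation}.

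The main obstacle is the ``$\Rightarrow$'' part of (iii): one has to see that strong infeasibility of the original problem, which a priori only yields a certificate in the \emph{larger} dual cone $\stdCone^*$, in fact already provides a certificate in the \emph{smaller} dual cone $\widetilde{\stdCone}^*$, and this rests entirely on the complementarity identities $a_{n_i}^Tw_{n_i}=0$ (forced by $a\in L$, $w\in L^\perp$) together with the explicit description of the complementary face of a Lorentz cone at a boundary ray. The remaining pieces — the blockwise interior estimates and the observation that a single large $t$ works in (i) — are routine.
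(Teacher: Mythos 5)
Your proof is correct and follows essentially the same route as the paper: the forward half of (i) blockwise, the converse via the shift $x+ta$ and Lemma \ref{lemma_recession}, (iii) by showing the same dual certificate survives through the complementarity relations $a_{n_i}^Tw_{n_i}=0$ (with $w_{n_i}=0$ on $\mathcal{H}_1$-blocks and $w_{n_i}\in\halfLine_{a_{n_i}'}^{n_i}$ on $\mathcal{H}_2$-blocks), and (ii) by elimination. The only differences are cosmetic (an explicit triangle-inequality estimate for $\mathcal{H}_1$-blocks where the paper just invokes Lemma \ref{lemma_recession}, and normalizing the certificate to $w^Tc=-1$ instead of $s^Tc<0$).
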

\begin{proof}
$(i)$ If $(\stdCone ,L,c)$ is strongly feasible, then for a relative interior point 
$y \in L+c$, we have $y_{n_{i}}^{T}a'_{n_{i}} > 0$, for all $i \in \mathcal{H}_{2}(a)$. All 
the other coordinate blocks of $y_{n_i}$ stay in the relative interior of the respective 
cones. So, $(\widetilde{\stdCone },L,c)$ is strongly feasible.

Now, if $(\widetilde{\stdCone },L,c)$  is strongly feasible
we pick $y \in L+c$  such that $y$ lies in the relative interior of $\widetilde{\stdCone }$.
For $i \in \mathcal{H}_{1}(a)$  we have $a_{n_{i}} \in \reInt \SOC{n_i}$
and for $i \in \mathcal{H}_{2}(a)$ we have $y_{n_{i}}^{T}a'_{n_{i}} > 0$. 
Hence if $t$ is sufficiently large we have $(y+ta)_{n_i}^{T}a'_{n_{i}} \in \text{int}(\SOC{n_i})$, 
for all $i \in \mathcal{H}_{1}(a) \cup\mathcal{H}_{2}(a) $, by Lemma \ref{lemma_recession}. It is also clear that 
adding $ta$ does not affect the fact that $y_{n_i} \in \text{ri} (\stdCone ^{n_i})$ for $i \not \in \mathcal{H}_{1}(a)\cup \mathcal{H}_{2}(a)$.

$(iii)$ If $(\widetilde{\stdCone }, L,c)$ is strongly infeasible then $(\stdCone ,L,c)$ also 
is because $K \subseteq \widetilde{\stdCone }$. Let us prove the converse now.
We have that $(\stdCone ,L,c)$ is strongly infeasible if 
and only if  there exists $s$ such that $s \in L^{\perp}\cap K^{*}$ and 
$s^{T}c < 0$ (see Lemma 5 of \cite{Luo97dualityresults}). In particular, $s^Ta = 0$. This means that $s_{n_i}^Ta_{n_i} = 0$ for every 
$i$, because $s \in K^*$ and $a \in K\cap L$.
It follows that for $i \in \mathcal{H}_{1}(a)$ we have $s_{n_{i}} = 0$. Also, 
for $i \in \mathcal{H}_{2}(a)$ we have that $s_{n_{i}}$ is a non-negative multiple of  $a_{n_{i}}'$ (including, of 
course, the possibility that $s_{n_{i}}$ is $0$)\footnote{Recall that if $x,y \in \SOC{n}$ satisfy $x^Ty = 0$, then $x_0\overline{y} + y_0\overline{x} = 0$. }.
We conclude that $s$ also produces strong separation for $(\widetilde{\stdCone },L,c)$ because 
$s \in \widetilde{\stdCone }^*$. So $(\widetilde{\stdCone }, L,c)$ is strongly infeasible.

Finally, $(ii)$ follows by elimination.

\end{proof}

\subsection{Relaxation sequence}
After applying Theorem \ref{theorem_reduction} to $(\stdCone ,L,c)$, it might  still 
be possible to relax it further. This motivates the next definition.

\begin{definition}[Relaxation sequence]
A relaxation sequence for $(\stdCone ,L,c)$ is a finite sequence 
of conic feasibility problems $\{ (\stdCone _{1},L,c), \ldots ,  (\stdCone _{\gamma},L,c)\}$ such that $K_1 = K$ and:
\begin{enumerate}
	\item Every $K_i$ is an extended second order cone, see the beginning of Section \ref{sec:relaxation}.
	\item For $i > 1$, there is $d^{i-1} \in K_{i-1}\cap L$ such that 
	$(\stdCone _{i},L,c)$ is obtained as a result of applying Theorem \ref{theorem_reduction} to 
	$K_{i-1}\cap L $ and $d^{i-1}$. In addition, we must have 
	$K_{i-1} \subsetneq K_{i}$ (i.e., we do not admit trivial relaxations).
\end{enumerate}
The vectors in $\{d^{1}, \ldots , d^{\gamma -1} \}$ are called \emph{reducing directions}, due 
to the fact that they came from the application of facial reduction to the dual system $(\stdCone^*,L^\perp,0)$.
A relaxation sequence  is \emph{maximal} if it does not admit non-trivial relaxations.
The problem $(\stdCone _{\gamma},L,c)$ is called \emph{the last problem} of the sequence.
The length of the sequence is defined to be $\gamma$.
\end{definition}
We now attempt to clarify the connection between 
relaxation sequences and facial reduction. Given a conic linear program 
$(\stdCone ,L,c)$, facial reduction algorithms (FRAs) \cite{Borwein1981495,borwein_facial_1981,pataki_strong_2013,article_waki_muramatsu} aim at identifying the 
minimal face $\minFace$ of $\stdCone$ which contains the feasible region $K\cap (L+c)$. 
This is done by generating a sequence of faces ending at $\minFace$.
In this respect, 
FRA and relaxation sequences accomplish different goals. However, it can 
be shown that the cones appearing in a relaxation sequence correspond to the 
\emph{dual} of the \emph{faces} obtained by applying FRA to $(\stdCone ^*,L^\perp, 0)$. 
As we are considering dual of faces instead of the faces themselves, this is more akin to 
the conic expansion algorithm as described in Section 4 of \cite{article_waki_muramatsu}.
While this allows us to cast our techniques in the conic expansion framework, as it is 
a more indirect route, it seems that
not much geometric insight is gained by doing that. Moreover, it is not obvious that results 
such as Proposition \ref{prop_last_problem} and Theorem \ref{theorem_directions} hold.  In the next two sections, we will 
use relaxation sequences to prove basic properties of weakly infeasible problems and 
unattained problems.

Since every reducing direction is responsible for relaxing at 
least one Lorentz cone, the maximum length of a relaxation sequence is 
$m+1$, where $m$ is the number of second order cones appearing in $\stdCone$.
Each relaxed problem almost preserves the feasibility status of 
the original, in the sense of Theorem \ref{theorem_reduction}. We will prove that when the relaxation sequence is maximal, 
the last problem cannot be weakly infeasible.
Before we go further, we need  a detour which we believe might 
be of independent interest.
\begin{theorem}\label{theo_lineality}
Let $C_{1}$ and $C_{2}$ be non-empty convex sets in $\Re^{n}$ such that 
$C_{1}$ is polyhedral, $C_{2}$ is closed. Suppose 
that 
\begin{equation*}
\reCone C_1\cap - \reCone C_2 \subseteq \lineality C_2,
\end{equation*}
where  $\reCone C = \{x \in \Re^n \mid x + C \subseteq C \}$ is the  recession cone of a closed convex set $C$.
Then $C_1 + C_2$ is closed.
\end{theorem}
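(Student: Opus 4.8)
The plan is to reduce to the classical fact that the sum of a polyhedral set and a closed convex set is closed provided their recession cones meet only in a common lineality direction — more precisely, I will invoke the standard criterion (see Corollary 9.1.2 and Theorem 20.3 in \cite{rockafellar}) that $C_1 + C_2$ is closed whenever $C_1$ is polyhedral, $C_2$ is closed convex, and no nonzero vector $y$ has $y \in \reCone C_1$ and $-y \in \reCone C_2$ \emph{unless} $y$ together with $-y$ both recede $C_2$, i.e. $y \in \lineality C_2$. Our hypothesis $\reCone C_1 \cap -\reCone C_2 \subseteq \lineality C_2$ is exactly this condition, so if one is willing to cite that refined version of the closedness criterion there is almost nothing to do. However, since the paper seems to want a self-contained argument, I would instead give a direct proof along the following lines.

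First I would split off the lineality of $C_2$. Let $V = \lineality C_2$ and write $\Re^n = V \oplus W$ for some complementary subspace $W$; let $P$ denote the projection onto $W$ along $V$. Then $C_2 = P(C_2) + V$, where $P(C_2) \subseteq W$ is a closed convex set containing no lines (its lineality is trivial), and $\reCone P(C_2) = P(\reCone C_2)$. Since $V \subseteq \reCone C_2 \subseteq \reCone(C_1 + C_2)$, closedness of $C_1 + C_2$ is equivalent to closedness of $P(C_1 + C_2) = P(C_1) + P(C_2)$ in $W$ (adding a subspace back in preserves closedness). Now $P(C_1)$ is still polyhedral, $P(C_2)$ is closed with trivial lineality, and I claim the recession hypothesis survives: if $y \in \reCone P(C_1) \cap -\reCone P(C_2)$, lift $y$ to $\hat y \in \reCone C_1$ with $P \hat y = y$; then $-\hat y$ need not be a recession direction of $C_2$, but some element of $\reCone C_1 \cap -\reCone C_2$ projects to $y$ — here I need to be careful, so let me instead argue directly in $W$. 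In $W$, if $y \in \reCone P(C_1) \cap -\reCone P(C_2)$ then $y \in \lineality P(C_2) = \{0\}$. Establishing this cleanly is the one delicate point, and I would handle it by noting $\reCone P(C_1) = P(\reCone C_1)$ (true since $C_1$ is polyhedral, so its recession cone is again polyhedral and projections commute nicely), and $-\reCone P(C_2) = -P(\reCone C_2) = P(-\reCone C_2)$, then using that $\reCone C_1$ is a polyhedral cone to lift the common element $y$ to a genuine common recession direction of $C_1$ and $-C_2$ in $\Re^n$, which by hypothesis lies in $V$, hence projects to $0$.

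Once reduced to the case $\lineality C_2 = \{0\}$ and $\reCone C_1 \cap -\reCone C_2 = \{0\}$, I would finish with a standard sequential argument: take $z^k = x^k + w^k \to z$ with $x^k \in C_1$, $w^k \in C_2$. If $\{w^k\}$ has a bounded subsequence we extract limits and are done by closedness of both sets. Otherwise $\norm{w^k} \to \infty$; passing to a subsequence, $w^k/\norm{w^k} \to w$ with $w \in \reCone C_2$, $\norm{w} = 1$. Since $z^k$ is bounded, $x^k/\norm{w^k} \to -w$, and here I use that $C_1$ is polyhedral: for a polyhedron, the directions in which a sequence of points can escape to infinity are exactly the recession directions, so $-w \in \reCone C_1$; more carefully, writing $C_1 = \{x : Ax \le b\}$, the bounded sequence $z^k$ forces $A x^k \le b$ with $\norm{x^k}\to\infty$, and the limiting direction $-w$ satisfies $A(-w)\le 0$, i.e. $-w \in \reCone C_1$. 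Thus $w \in \reCone C_1 \cap -\reCone C_2 \setminus \{0\}$ — wait, $-w \in \reCone C_1$ and $w \in \reCone C_2$ gives $-w \in \reCone C_1 \cap -\reCone C_2$, contradicting $\reCone C_1 \cap -\reCone C_2 = \{0\}$. Hence $\{w^k\}$ is bounded after all, and $z \in C_1 + C_2$.

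The main obstacle is the bookkeeping in the first step — verifying that quotienting out $\lineality C_2$ preserves both polyhedrality of $C_1$'s image and, crucially, the recession-cone hypothesis — because projections of recession cones can in general be subtle; the saving grace is that $C_1$ is polyhedral, so $\reCone C_1$ is a finitely generated cone and all the relevant projections and intersections behave well. If the referee prefers brevity, the entire argument collapses to citing the polyhedral-plus-closed closedness theorem from \cite{rockafellar} with the lineality-refined hypothesis, and I would present that as the primary route with the sequential argument as a remark.
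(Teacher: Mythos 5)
Your proposal is correct, and its primary route coincides exactly with the paper: the paper's entire proof of Theorem \ref{theo_lineality} is the citation ``See Theorem 20.3 in \cite{rockafellar}'', i.e.\ the statement is recorded verbatim from Rockafellar and not reproved. What you add beyond the paper is a self-contained proof of that theorem, and it is sound: quotient out $V=\lineality C_2$ by a projection $P$ onto a complement $W$, note $P(C_2)=C_2\cap W$ is closed with trivial lineality and $P(C_1)$ is polyhedral with $\reCone P(C_1)=P(\reCone C_1)$, observe $C_1+C_2=(P(C_1)+P(C_2))+V$ so closedness reduces to $W$, and finish with the asymptotic-direction argument. The one point you flag as delicate does close in one line: if $y\in \reCone P(C_1)$ and $-y\in\reCone P(C_2)\subseteq \reCone C_2$, lift $y$ to $x=y+v\in\reCone C_1$ with $v\in V$; since $-v\in V=\lineality C_2\subseteq\reCone C_2$ and $\reCone C_2$ is a convex cone, $-x=-y-v\in\reCone C_2$, so $x\in\reCone C_1\cap-\reCone C_2\subseteq V$ and $y=P(x)=0$ --- this is where polyhedrality of $C_1$ earns its keep (closedness of $P(C_1)$ and $\reCone P(C_1)=P(\reCone C_1)$ can fail for general closed convex sets, cf.\ the hyperbola-plus-line example). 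Your final sequential step actually needs no polyhedrality at all once $\reCone C_1\cap-\reCone C_2=\{0\}$, since the fact that normalized unbounded sequences in a closed convex set converge to recession directions holds generally. In short: correct; what your longer route buys is independence from \cite{rockafellar}, at the cost of a page of bookkeeping the paper deliberately avoids.
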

\begin{proof}
See Theorem 20.3 in \cite{rockafellar}.	
\end{proof}
We will show that if $C_2$ is the direct product of a closed convex set 
and a polyhedral set,  we may weaken the assumptions of the Theorem \ref{theo_lineality}.

\begin{proposition}\label{prop_lineality}
Let $C_{1}$ and $C_{2}\times P$ be non-empty convex sets in $\Re^{n}$ such that 
$C_{1}$ and $P$ are polyhedral,  and $C_{2}$ is closed. Suppose that
\begin{equation}\label{eq_prop_lineality}
\reCone C_1\cap - ( \reCone C_2\times \reCone P) \subseteq \lineality C_2 \times -\reCone P.
\end{equation}
Then $C_1 + (C_2\times P)$ is closed.
\end{proposition}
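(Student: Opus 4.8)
The plan is to reduce Proposition~\ref{prop_lineality} to Theorem~\ref{theo_lineality} by absorbing the polyhedral factor $P$ into $C_1$. Decompose $\Re^n = \Re^p \times \Re^q$ in accordance with the product structure, so that $C_2 \subseteq \Re^p$, $P \subseteq \Re^q$, and write $0_p$, $0_q$ for the origins of $\Re^p$, $\Re^q$. Set $C_1' := C_1 + (\{0_p\} \times P)$; since the Minkowski sum of two polyhedral sets is polyhedral, $C_1'$ is polyhedral. A direct manipulation of the sums gives
\[
C_1 + (C_2 \times P) = C_1' + \bigl(C_2 \times \{0_q\}\bigr),
\]
and $C_2 \times \{0_q\}$ is closed because $C_2$ is. Hence it suffices to check that $C_1'$ and $C_2 \times \{0_q\}$ satisfy the hypothesis of Theorem~\ref{theo_lineality}.

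First I would record the recession cones and lineality spaces involved. Writing a polyhedron as the sum of a polytope and its recession cone, one obtains $\reCone C_1' = \reCone C_1 + (\{0_p\} \times \reCone P)$; moreover $\reCone(C_2 \times \{0_q\}) = \reCone C_2 \times \{0_q\}$ and $\lineality(C_2 \times \{0_q\}) = \lineality C_2 \times \{0_q\}$. The goal then becomes to show
\[
\reCone C_1' \cap -\bigl(\reCone C_2 \times \{0_q\}\bigr) \subseteq \lineality C_2 \times \{0_q\}.
\]
To prove this, take $v$ in the left-hand side and write $v = w + (0_p, r)$ with $w \in \reCone C_1$ and $r \in \reCone P$, while simultaneously $v = (u, 0_q)$ with $-u \in \reCone C_2$. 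Then $w = (u, -r)$, and since $-u \in \reCone C_2$ and $r \in \reCone P$ we get $w \in -(\reCone C_2 \times \reCone P)$. Hypothesis~\eqref{eq_prop_lineality} now forces $w \in \lineality C_2 \times -\reCone P$, so $u \in \lineality C_2$ and thus $v = (u, 0_q) \in \lineality C_2 \times \{0_q\}$, as desired. Applying Theorem~\ref{theo_lineality} to $C_1'$ and $C_2 \times \{0_q\}$ yields that $C_1 + (C_2 \times P)$ is closed.

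I expect the only point requiring care to be the identity $\reCone C_1' = \reCone C_1 + (\{0_p\} \times \reCone P)$ (in fact only the inclusion ``$\subseteq$'' is used): one direction is automatic, while the other relies essentially on polyhedrality of both $C_1$ and $P$. Everything else is routine bookkeeping with the product decomposition and the definitions of recession cone and lineality space.
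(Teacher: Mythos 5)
Your proposal is correct and follows essentially the same route as the paper: both write $C_1 + (C_2\times P) = \bigl(C_1 + \{0\}\times P\bigr) + \bigl(C_2\times\{0\}\bigr)$, invoke polyhedrality to get $\reCone\bigl(C_1 + \{0\}\times P\bigr) = \reCone C_1 + \{0\}\times \reCone P$, and verify the hypothesis of Theorem \ref{theo_lineality} by the same componentwise argument using \eqref{eq_prop_lineality}. The only difference is cosmetic bookkeeping (your explicit $\Re^p\times\Re^q$ split and the Minkowski--Weyl justification of the recession-cone identity, which the paper simply asserts).
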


\begin{proof}
We have that $C_1 + C_2\times P = (C_1 + \{0\}\times P) + C_2\times \{0\}$.
Since $C_1$ and $P$ are polyhedral sets, $(C_1 + (\{0\}\times P))$ is also polyhedral. 
We would 
like to use Theorem \ref{theo_lineality} with $(C_1 + \{0\}\times P)$  and $C_2\times \{0\} $. For 
that purpose, we are required to check that 
\begin{equation}\label{eq_prop_lineality2}
(\reCone C_1 + (\{0\}\times \reCone P))\cap -(\reCone C_2\times \{0\}) \subseteq \lineality C_2 \times \{0\},
\end{equation}
because, due to polyhedrality, $\reCone (C_1 + (\{0\}\times P))  = \reCone C_1 + (\{0\}\times \reCone P)$.
Let $(x,y)$ be a point that belongs to the set at the left-hand side of Equation \eqref{eq_prop_lineality2}, 
then $x \in - \reCone C_2 $ and $y = a+p = 0$, where $p \in \reCone P$ and 
$(x,a) \in  \reCone C_1$. It follows that $(x,a) \in -(\reCone C_2 \times \reCone P)$. Since 
we are under the assumption that Equation \eqref{eq_prop_lineality} holds, $x \in \lineality C_2$. 
Hence, $(x,y) \in  \lineality C_2 \times \{0\}$ and we are done.
\end{proof}
The following proposition is a small modification of Corollary 20.3.1 of \cite{rockafellar}.
\begin{proposition}\label{prop_strong_separation}
Let $C_{1}$ and $C_{2}\times P$ be non-empty convex sets in $\Re^{n}$ such that 
$C_{1}$ and $P$ are polyhedral, and $C_{2}$ is closed. Suppose that
\begin{equation}\label{eq_prop_lineality_separation}
\reCone C_1\cap (\reCone C_2\times \reCone P) \subseteq \lineality C_2 \times \reCone P.
\end{equation}
and that $C_1 \cap (C_{2}\times P) = \emptyset$. Then $C_1$ and $C_{2}\times P$ 
can be strongly separated.
\end{proposition}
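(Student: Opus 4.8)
The plan is to derive Proposition~\ref{prop_strong_separation} from Proposition~\ref{prop_lineality} by the standard device that relates strong separation of two convex sets to closedness of their difference. Specifically, I would consider the set $D = C_1 - (C_2 \times P) = C_1 + (-C_2 \times -P)$. Since $-C_2$ is closed and $-P$ is polyhedral, and recession cones and lineality spaces behave well under negation ($\reCone(-C_2) = -\reCone C_2$, $\lineality(-C_2) = \lineality C_2$), the hypothesis \eqref{eq_prop_lineality_separation} translates exactly into the hypothesis \eqref{eq_prop_lineality} of Proposition~\ref{prop_lineality} applied to $C_1$ and $(-C_2) \times (-P)$. Hence $D$ is closed.

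Next I would use the assumption $C_1 \cap (C_2 \times P) = \emptyset$, which is equivalent to $0 \notin D$. Since $D$ is closed and does not contain the origin, $\mathrm{dist}(0, D) > 0$, so $\{0\}$ and $D$ can be strongly separated: there is a vector $h$ and $\varepsilon > 0$ with $h^T z \geq \varepsilon$ for all $z \in D$ (after possibly replacing $h$ by $-h$). Unwinding the definition of $D$, this says $h^T u - h^T v \geq \varepsilon$ for all $u \in C_1$ and all $v \in C_2 \times P$, i.e. $\inf_{u \in C_1} h^T u \geq \varepsilon + \sup_{v \in C_2 \times P} h^T v$. Choosing any real $\beta$ strictly between these two quantities gives a hyperplane $\{x \mid h^T x = \beta\}$ that strongly separates $C_1$ and $C_2 \times P$ in the sense defined in Section~\ref{sec:notation} (one can take the separating ball to have radius $\varepsilon / (2\norm{h})$, so that $C_1 + B$ and $(C_2 \times P) + B$ lie in opposite open half-spaces).

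The main obstacle, and the only place that needs care, is verifying that the translation of hypothesis \eqref{eq_prop_lineality_separation} into the form \eqref{eq_prop_lineality} is exact — in particular that the mild asymmetry between the $C_2$ factor (where the lineality space appears) and the $P$ factor (where the full recession cone appears) is preserved correctly under negation. Since $P$ is polyhedral its recession cone need not be a subspace, so one cannot simply symmetrize; but $-\reCone P$ is again $\reCone(-P)$ and the containment $\reCone C_1 \cap (-(\reCone C_2 \times \reCone P)) \subseteq \lineality C_2 \times (-\reCone P)$ is literally \eqref{eq_prop_lineality} with $C_2, P$ replaced by $-C_2, -P$. After that, invoking Proposition~\ref{prop_lineality} and then Theorem~11.6 of \cite{rockafellar} (already quoted in Section~\ref{sec:notation}: $\mathrm{dist}(C_1', C_2') > 0$ iff strong separation holds) closes the argument. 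One should also note explicitly that $D$ being closed and $0 \notin D$ forces $\mathrm{dist}(\{0\}, D) > 0$, which uses nothing more than closedness.
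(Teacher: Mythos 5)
Your proposal is correct and follows essentially the same route as the paper: note $0 \notin C_1 - (C_2 \times P)$, apply Proposition \ref{prop_lineality} to $C_1$ and $-(C_2\times P)$ to get closedness of the difference, and conclude strong separation via the distance criterion (Theorem 11.6 of \cite{rockafellar}). You merely spell out the negation bookkeeping and the final separation step that the paper leaves implicit.
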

\begin{proof}
Since $C_1 \cap( C_{2}\times P) = \emptyset$, we have that $0 \not \in C_1 - (C_{2}\times P)$.
Applying Proposition \ref{prop_lineality} to $C_1$ and $-(C_{2}\times P)$ we find 
that $C_1 - (C_{2}\times P) $ is closed. Therefore, both sets can be strongly separated.
\end{proof}

\begin{proposition}\label{prop_last_problem}
If 	$\{ (\stdCone _{1},L,c), \ldots ,  (\stdCone _{\gamma},L,c)\}$ is a maximal relaxation 
sequence for $(\stdCone ,L,c)$ then we have:
\begin{enumerate}[i.]
	\item  $(\stdCone ,L,c)$ is strongly feasible if and only if $(\stdCone _{\gamma},L,c)$ is strongly feasible;
	\item  $(\stdCone ,L,c)$  is in weak status if and only if  $(\stdCone _{\gamma},L,c)$ is weakly feasible;
	\item  $(\stdCone ,L,c)$ is strongly infeasible if and only if $(\stdCone _{\gamma},L,c)$ is strongly infeasible.
\end{enumerate}
\end{proposition}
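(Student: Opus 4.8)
The plan is to treat (i) and (iii) by induction along the sequence, and to deduce (ii) from the additional fact that the last problem of a \emph{maximal} relaxation sequence can never be weakly infeasible. For the induction: by the definition of a relaxation sequence, each $(\stdCone_i,L,c)$ with $i>1$ is obtained from $(\stdCone_{i-1},L,c)$ by an application of Theorem \ref{theorem_reduction}, using the reducing direction $d^{i-1}\in\stdCone_{i-1}\cap L$ as the element $a$; hence consecutive problems are mostly the same. Since the three statuses — strongly feasible, weak status, strongly infeasible — partition all feasibility problems, ``mostly the same'' is an equivalence relation, so chaining from $i=1$ to $i=\gamma$ shows that $(\stdCone,L,c)=(\stdCone_1,L,c)$ and $(\stdCone_\gamma,L,c)$ are mostly the same. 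This gives (i) and (iii) at once, and also gives that $(\stdCone,L,c)$ is in weak status iff $(\stdCone_\gamma,L,c)$ is; thus (ii) will follow once we show that $(\stdCone_\gamma,L,c)$ is in weak status iff it is weakly feasible, i.e. once we rule out weak infeasibility for $(\stdCone_\gamma,L,c)$ — which by definition means showing: if $(\stdCone_\gamma,L,c)$ is infeasible, then it is strongly infeasible.

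To exploit maximality I would first observe that the sequence admits no non-trivial extension iff there is no $a\in\stdCone_\gamma\cap L$ with $\mathcal{H}_1(a)\cup\mathcal{H}_2(a)\neq\emptyset$: any such $a$ would make Theorem \ref{theorem_reduction} produce a strictly larger cone (at least one block passes from $\SOC{n_i}$ to $\Re^{n_i}$ or to a supporting half-space). Unwinding the definitions of $\mathcal{H}_1$ and $\mathcal{H}_2$, this says exactly that for every $a\in\stdCone_\gamma\cap L$ one has $a_{n_i}=0$ for each index $i$ with $\stdCone_\gamma^{n_i}=\SOC{n_i}$, since a nonzero $a_{n_i}\in\SOC{n_i}$ would lie either in $\reInt\SOC{n_i}$ or in $(\relBd\SOC{n_i})\setminus\{0\}$, placing $i$ in $\mathcal{H}_1(a)$ or $\mathcal{H}_2(a)$.

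Next I would write $\stdCone_\gamma$, after permuting coordinates, as $C_2\times P$, where $C_2$ collects the blocks that are genuine Lorentz cones $\SOC{n_i}$ — a pointed closed convex cone, so $\lineality C_2=\{0\}$ and $\reCone C_2=C_2$ — and $P$ collects the remaining blocks, each of which is $\{0\}$, some $\Re^{n_j}$, a half-space $H_d^{n_j}$, or a half-line $\halfLine_d^{n_j}$, so that $P$ is a polyhedral cone with $\reCone P=P$. Taking $C_1=L+c$, which is polyhedral with $\reCone C_1=L$, the assumption that $(\stdCone_\gamma,L,c)$ is infeasible is exactly $C_1\cap(C_2\times P)=\emptyset$, and the recession-cone hypothesis of Proposition \ref{prop_strong_separation}, namely $\reCone C_1\cap(\reCone C_2\times\reCone P)\subseteq\lineality C_2\times\reCone P$, reduces to $L\cap(C_2\times P)\subseteq\{0\}\times P$. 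But $L\cap(C_2\times P)=L\cap\stdCone_\gamma$, and the maximality characterization from the previous paragraph says precisely that the $C_2$-component of every point of $L\cap\stdCone_\gamma$ vanishes; hence the inclusion holds. Proposition \ref{prop_strong_separation} then yields strong separation of $C_1$ and $\stdCone_\gamma$, i.e. $\text{dist}(L+c,\stdCone_\gamma)>0$, so $(\stdCone_\gamma,L,c)$ is strongly infeasible. Consequently a $(\stdCone_\gamma,L,c)$ in weak status cannot be infeasible and must be weakly feasible; combined with the first paragraph this proves (ii), whose converse is trivial since weak feasibility is a sub-case of weak status.

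The routine parts are the recession/lineality computations and the transitivity bookkeeping of the first two paragraphs; the one step that needs genuine care is the third paragraph — recognizing that maximality is exactly the recession-cone hypothesis of Proposition \ref{prop_strong_separation} once $\stdCone_\gamma$ is split into its Lorentz part $C_2$ and its polyhedral part $P$, which is precisely the reason that proposition was stated for a product $C_2\times P$ rather than for a single closed set.
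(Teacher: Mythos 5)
Your proposal is correct and follows essentially the same route as the paper: induction via Theorem \ref{theorem_reduction} for (i), (iii) and the weak-status equivalence, then splitting $\stdCone_\gamma$ into its Lorentz part and polyhedral part and invoking Proposition \ref{prop_strong_separation}, with maximality supplying exactly the recession-cone hypothesis. Your second paragraph merely makes explicit (via $\mathcal{H}_1$, $\mathcal{H}_2$) the step the paper states more briefly, namely that a point of $L\cap\stdCone_\gamma$ with a nonzero Lorentz block would permit a further non-trivial relaxation.
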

\begin{proof}
By induction and using Theorem \ref{theorem_reduction}, items $(i)$ and 
$(iii)$ follow. We can also conclude that $(\stdCone ,L,c)$  
is in weak status if and only if  $(\stdCone _{\gamma},L,c)$ is in weak status. 
Now, suppose that $(\stdCone _{\gamma},L,c)$  is infeasible and that $(\stdCone ,L,c)$ is in weak status.
 To finish the proof, we have to show that $(\stdCone _{\gamma},L,c)$ cannot be weakly infeasible.

Reordering if necessary, 
we may assume that $K_\gamma = \widetilde{\stdCone } \times \widetilde P$, where $\widetilde{\stdCone } $  is the direct 
product of Lorentz cones and $P$ is a polyhedral cone. In this case, $\widetilde P$ is a 
direct product of half-spaces and vector spaces. Now, 
we would like to use Proposition \ref{prop_strong_separation} by setting $C_{1} = L+c$, $C_{2} = \widetilde{\stdCone }$ 
and $P = \widetilde P$.
Let us check that Equation \eqref{eq_prop_lineality_separation} is satisfied. We have 
$\reCone C_1\cap (\reCone C_2\times \reCone P)  = L\cap (\widetilde{\stdCone } \times \widetilde P)$
and $\lineality C_2 \times \reCone P  = \{0\} \times \widetilde P$.

Pick an element $x \in L\cap (\widetilde{\stdCone } \times \widetilde P)$. We must 
have $x \in \{0\}\times P$, otherwise we would be able to apply Proposition 
\ref{theorem_reduction} one more time, which would contradict the assumption of maximality.
Since Equation \eqref{eq_prop_lineality_separation} is satisfied, it follows 
that if $(\stdCone _{\gamma},L,c)$ is infeasible, it must be strongly infeasible. 

\end{proof}

\begin{example}\label{example_relaxation}
Let $(\stdCone ,L,c)$ be such that $K  = \SOC{3} \times \SOC{3}$ and 
$L+c =  \{ (t,t,s)\times (s,s,1) \mid (t,t,s) \in \SOC{3} ,  (s,s,1) \in \SOC{3} \}$.
Then, $a = (1,1,0) \times (0,0,0)  \in K\cap L$. Thus, we can relax the cone 
constraint from $\SOC{3} \times \SOC{3}$ to  $H^{3}_{a_{n_1}'} \times \SOC{3}$. Now, 
$b = (0,0,1) \times (1,1,0) \in H^{3}_{a_{n_1}'} \times \SOC{3}$. Thus, we can relax the problem 
from $H^{3}_{a_{n_1}'} \times \SOC{3}$ to $H^{3}_{a_{n_1}'} \times H^{3}_{b_{n_2}'}$.
The problem $(H^{3}_{a_{n_1}'} \times H^{3}_{b_{n_2}'},L,c)$ is weakly feasible, because 
no point in $L+c$  strictly satisfies the inequalities which define $H^{3}_{a_{n_1}'} \times H^{3}_{b_{n_2}'}$.
This implies that $(\stdCone ,L,c)$ is in weak status.
\end{example}

\section{The minimal number of directions needed to approach $\stdCone$}\label{sec:min_d}
\newcommand{\SymMatrices}[1]{{\mathcal{S}^{#1}}}
\newcommand{\PSDcone}[1]{{\mathcal{S}^{#1}_+}}
Let $\stdCone$ be an extended second order cone, therefore it is a direct product 
of Lorentz cones and polyhedral cones. Suppose that there are $m$ Lorentz cones among them.
In this section, we will show that given a weakly infeasible feasibility problem $(\stdCone ,L,c)$ there is 
$c' \in \Re^n$, a subspace $L'$ contained in $L$ of dimension at most $m$ such 
that $(\stdCone ,L',c')$ is weakly infeasible.  
This means that starting at $c'$, at most $m$ directions are 
needed to approach the cone. One application of this result is on the study 
of problems with unattained optimal value, as in Section \ref{sec:unattained}.
Note that, \textit{a priori},  the number of direction needed to approach the cone could be up to the dimension 
of the affine space $L+c$. Theorem \ref{theorem_directions} states, however, it is bounded by $m$, regardless of
the dimension of $L+c$.
\begin{theorem}\label{theorem_directions}
Let $(\stdCone ,L,c)$ be a weakly infeasible problem. 
Then there are a subspace $L'\subseteq L$ 
and $c' \in L+c$ such that $(\stdCone ,L',c')$ is weakly infeasible and 
dimension of $L'+c'$ is at most $m$, where $m$ is the number of Lorentz cones.
\end{theorem}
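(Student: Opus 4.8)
The plan is to build $L'$ and $c'$ directly from a weakly infeasible sequence together with the reducing directions coming from a maximal relaxation sequence. By Proposition \ref{prop_last_problem}, if $(\stdCone,L,c)$ is weakly infeasible (hence in weak status and infeasible), then a maximal relaxation sequence $\{(\stdCone_1,L,c),\ldots,(\stdCone_\gamma,L,c)\}$ ends at a \emph{weakly feasible} problem $(\stdCone_\gamma,L,c)$. Associated to the sequence are reducing directions $d^1,\ldots,d^{\gamma-1}$, each $d^{i}\in \stdCone_i\cap L$, and each such direction is responsible for relaxing at least one Lorentz cone; hence $\gamma - 1 \le m$. I want to argue that a weakly infeasible sequence for the original problem can be produced using only these $\gamma-1$ directions plus a single feasible point of the last problem $(\stdCone_\gamma,L,c)$, which will give a subspace $L' = \spanVec\{d^1,\ldots,d^{\gamma-1}\}$ of dimension at most $m$ and a suitable $c'$.

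The key steps, in order, are as follows. First, pick $y \in \stdCone_\gamma\cap(L+c)$, a feasible point of the last (weakly feasible) relaxed problem. Next, reverse the relaxation process one stage at a time: I claim that for each $i$, a point feasible for $(\stdCone_{i+1},L,c)$ can be pushed toward feasibility for $(\stdCone_i,L,c)$ by adding a large multiple of the reducing direction $d^i$. Concretely, on the blocks where $\stdCone_i$ was relaxed to $\Re^{n_j}$ (the set $\mathcal{H}_1(d^i)$) or to a half-space $H^{n_j}_{d^i_{n_j}{}'}$ (the set $\mathcal{H}_2(d^i)$), Lemma \ref{lemma_recession} shows that adding $t\,d^i$ with $t$ large moves the relevant block into (or arbitrarily close to) the interior of $\SOC{n_j}$, exactly as in the proof of Theorem \ref{theorem_reduction}(i); on the remaining blocks adding $t\,d^i$ does no harm since $d^i_{n_j}$ lies in the recession cone of $\stdCone_i^{n_j}$. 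Iterating this from $i=\gamma-1$ down to $i=1$ with suitable (possibly sequence-indexed) coefficients $t_i^{(k)}\to\infty$ produces a sequence
\begin{equation*}
u^{(k)} = y + \sum_{i=1}^{\gamma-1} t_i^{(k)} d^i \in L+c
\end{equation*}
with $\mathrm{dist}(u^{(k)},\stdCone)\to 0$. Then set $c' = y$ and $L' = \spanVec\{d^1,\ldots,d^{\gamma-1}\}$, so $\dim(L'+c') \le \gamma-1\le m$ and the $u^{(k)}$ all lie in $L'+c'$, giving a weakly infeasible sequence for $(\stdCone,L',c')$. Finally I must check that $(\stdCone,L',c')$ is genuinely infeasible (not merely weakly infeasible-looking): this follows because $L'+c'\subseteq L+c$ and $(\stdCone,L,c)$ is infeasible, so a fortiori $(\stdCone,L',c')$ has no feasible point; combined with the sequence $u^{(k)}$ this yields weak infeasibility.

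The main obstacle I expect is the careful bookkeeping in the downward iteration: when I undo relaxation stage $i$, I must ensure that the large multiple of $d^i$ needed to fix the blocks in $\mathcal{H}_1(d^i)\cup\mathcal{H}_2(d^i)$ does not destroy the approximate-feasibility already achieved on blocks handled at later stages $i+1,\ldots,\gamma-1$, nor the exact feasibility on untouched blocks. The point is that $d^i\in\stdCone_i\cap L$ implies $d^i_{n_j}$ lies in the recession cone of $\stdCone_i^{n_j}$ for every block $j$, and $\stdCone \subseteq \stdCone_\ell$ for all $\ell$, so adding $t_i d^i$ only ever helps or is neutral on every block — no genuine conflict arises, but this monotonicity has to be stated precisely and the choice of the $t_i^{(k)}$ (e.g.\ $t_i^{(k)}=k$ for all $i$, or a nested choice $t_1^{(k)}\gg t_2^{(k)}\gg\cdots$) justified so that the distance to $\stdCone$, block by block, tends to $0$. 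A secondary point to handle is the degenerate case $\gamma=1$, where no relaxation is possible; but then $(\stdCone,L,c)$ cannot be weakly infeasible by Proposition \ref{prop_last_problem}, so this case is vacuous.
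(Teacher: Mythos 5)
Your construction uses the same objects as the paper's proof ($L'=\spanVec\{d^1,\ldots,d^{\gamma-1}\}$, $c'$ a feasible point of the weakly feasible last problem, the count $\gamma-1\le m$, infeasibility from $L'+c'\subseteq L+c$), but the step where you certify $\mathrm{dist}(\stdCone,L'+c')=0$ has a genuine gap. The central claim that ``adding $t_i d^i$ only ever helps or is neutral on every block'' is false: for a block $j$ relaxed at an \emph{earlier} stage $i'<i$, the direction $d^i$ is only known to satisfy $d^i_{n_j}\in H^{n_j}_{(d^{i'}_{n_j})'}$ (or $d^i_{n_j}\in\Re^{n_j}$), not $d^i_{n_j}\in\SOC{n_j}$; lying in the recession cone of the \emph{relaxed} block says nothing about the distance to the original Lorentz cone there, and a large multiple of $d^i$ can push that block arbitrarily far from $\SOC{n_j}$. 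For instance, with $d^{i'}_{n_j}=(1,1,0)$ and $d^i_{n_j}=(1,1,10)$ the half-space inequality holds (with equality), yet $\mathrm{dist}\bigl(x+t\,d^i_{n_j},\SOC{3}\bigr)$ grows linearly in $t$; moreover $d^{i'}_{n_j}+d^i_{n_j}=(2,2,10)\notin\SOC{3}$, so your first suggested choice $t_i^{(k)}=k$ for all $i$ can fail outright. Hence the nested choice $t_1^{(k)}\gg\cdots\gg t_{\gamma-1}^{(k)}$, where later-stage directions are added first and each earlier coefficient is chosen after (and much larger than) all later ones to repair the damage they cause on earlier-relaxed blocks, is not optional bookkeeping but the heart of the argument, and it is not carried out. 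A second, related gap: Lemma \ref{lemma_recession} only applies when $x^T(d^i_{n_j})'>0$, while feasibility for the relaxed problem gives only membership in the \emph{closed} half-space; if $x^T(d^i_{n_j})'=0$, then $x+t\,d^i_{n_j}$ never enters $\SOC{n_j}$ and you need a separate estimate, e.g. $\norm{\overline z}-z_0=(\norm{\overline z}^2-z_0^2)/(\norm{\overline z}+z_0)$ with bounded numerator and denominator tending to $+\infty$, to see that the distance still tends to $0$; your parenthetical ``or arbitrarily close to'' hides exactly this point.

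Note also that no sequence construction is needed for this theorem. The paper stops much earlier: since each $d^i\in \stdCone_i\cap L'$, the problem $(\stdCone,L',c')$ admits the \emph{same} relaxation sequence, which remains maximal because $L'\subseteq L$; its last problem $(\stdCone_\gamma,L',c')$ is feasible since $c'\in\stdCone_\gamma$, so Proposition \ref{prop_last_problem} gives that $(\stdCone,L',c')$ is in weak status, and infeasibility (from $L'+c'\subseteq L+c$) upgrades this to weak infeasibility. Your constructive route can be repaired (nested coefficients plus the distance estimate above) and then yields, as a by-product, an explicit weakly infeasible sequence -- something the paper only exploits later, in Section \ref{sec:back} -- but for the statement at hand it is the harder path and, as written, incomplete at its key step.
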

\begin{proof}
Let $\{ (\stdCone _{1},L,c), \ldots ,  (\stdCone _{\gamma},L,c)\}$ be a maximal 
relaxation sequence and  $\{d^{1}, \ldots , d^{\gamma -1} \}$ the 
associated set of reducing directions.  Each $d^i$ 
is responsible for relaxing at least one Lorentz cone. Since there are most $m$ of them,
there are at most $m$ directions. Due to Proposition \ref{prop_last_problem}, 
the last problem is weakly feasible, so it admits a feasible solution 
$c'$.

If $L'$ is the space spanned by $\{d^{1}, \ldots , d^{\gamma -1} \}$ then 
$(\stdCone ,L',c')$ is weakly infeasible. After all, $(\stdCone ,L',c')$ shares 
the same maximal relaxation sequence and Proposition \ref{prop_last_problem} implies 
that $(\stdCone ,L',c')$ has weak status. Also,   $L'+c'$ is an affine subspace 
of $L+c$, so $(\stdCone ,L',c')$ is an infeasible problem. 
\end{proof}
Let $\PSDcone{n}$ denote the cone of $n\times n$ positive semidefinite matrices.
In \cite{lourenco_muramatsu_tsuchiya}, it was shown that given a weakly infeasible 
semidefinite feasibility problem  (SDFP) $(\PSDcone{n},L,c)$, there is an affine space $L'+c'$ contained in $L+c$
of dimension at most $n-1$ such that $(\PSDcone{n},L',c')$ is weakly infeasible. Transforming a weakly infeasible 
SOCFP into a $n\times n$ dimensional SDFP, immediately yields the bound $n-1$.  
Note that this a much worse bound since $n-1$ is typically  larger than the number 
of Lorentz cones $m$.

Recently, Liu and Pataki generalized the results in \cite{lourenco_muramatsu_tsuchiya} and showed 
that the dimension of $L'+c'$ can be taken to be less or equal than $\ell _{K^*} -1$, see items $ii.$ and $iii.$ of 
Theorem 9 in \cite{liu_pataki_2015}. The quantity $\ell _{K^*}$ corresponds to the length of 
the longest chain of faces of $\stdCone^*$. A chain of faces of $\stdCone$ is a finite sequence of 
faces satisfying $F_1\subsetneq \ldots \subsetneq F_\ell$ and the length is defined to 
be $\ell$. For the SDP case, they showed that the bound can be refined to 
$\ell _\PSDcone{n} -2$, which matches the bound discussed in \cite{lourenco_muramatsu_tsuchiya}, since 
$\ell _\PSDcone{n} = n+1$.

If $K = \SOC{n_1}\times \ldots \times \SOC{n_m}$, then the largest chain of 
faces of $K^*$ has length $2m + 1$. Liu and Pataki's result implies the bound 
$2m$ on the dimension of  $L'+c'$, which is too pessimistic in view of 
Theorem \ref{theorem_reduction}.

\section{Finding the optimal value in a strongly feasible unattained problem}\label{sec:unattained}
Consider a pair of primal and dual SOCPs problems:
\begin{align}
\underset{x}{\inf} & \quad c^Tx \label{eq:primal}\tag{P}\\ 
\mbox{subject to} & \quad \stdMap x = b, \quad x \in K^* \nonumber \\ \nonumber\\
\underset{y}{\sup} & \quad b^Ty \label{eq:dual} \tag{D} \\ 
\mbox{subject to} & \quad c - \stdMap ^Ty \in K \nonumber,
\end{align}
where $\stdMap : \Re^n \to \Re^m$ is linear map, $b \in \Re^m $, $c \in \Re^n$ and  
$\stdMap ^T$ is the adjoint of $\stdMap$.  We will denote by $\pOpt$ and $\dOpt$, the primal and dual optimal values, respectively.
Now, suppose that one of them (but not both) satisfy Slater's condition. This assumption can be satisfied by 
applying facial reduction to \eqref{eq:dual}, for instance.
Under these conditions, if the objective function of \eqref{eq:dual} is bounded above, 
then $\pOpt = \dOpt$ and the primal optimal value is attained. However, the dual optimal 
value may not be attained. It is natural then to consider whether  \eqref{eq:primal} and \eqref{eq:dual}
can be regularized so that the common optimal value is attained at both sides. Moreover, 
one may be interested in finding dual feasible points for \eqref{eq:dual} which are arbitrarily close to optimality. 
In this section, we will show how to use the techniques developed so far to accomplish both tasks.

The first step is to consider the subspace $L = (\matRange \stdMap ^T) \cap \{ \stdMap^T y \mid \inProd{b}{y} = 0 \}$.
Let $a = -\stdMap ^T y^1   \in L\cap \stdCone$ be a nonzero point and let $\widetilde \stdCone$ be cone obtained 
as a result of applying Theorem \ref{theorem_reduction} to $(\stdCone ,L,c)$. 
We have the following lemma.
\begin{lemma}\label{lemma:preserv}
Suppose that \eqref{eq:dual} is strongly feasible, i.e., there is $y$ such 
that $c - \stdMap^Ty \in \reInt \stdCone$.
Let $\dOptRed = \sup \{b^Ty \mid c- \stdMap ^T y \in \widetilde \stdCone \}$. 
Then $\dOpt = \dOptRed$. 
\end{lemma}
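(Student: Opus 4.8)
The plan is to show that relaxing the dual cone from $\stdCone$ to $\widetilde{\stdCone}$ changes neither the feasibility behaviour near optimality nor the optimal value, by exploiting the fact that $a = -\stdMap^Ty^1 \in L \cap \stdCone$ lies in the feasible recession directions of \eqref{eq:dual} that are orthogonal to $b$. First I would record the trivial inequality: since $\stdCone \subseteq \widetilde{\stdCone}$, every point feasible for \eqref{eq:dual} is feasible for the relaxed problem, hence $\dOpt \leq \dOptRed$. The work is the reverse inequality $\dOptRed \leq \dOpt$.

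For that, I would argue by contradiction or directly via a perturbation argument. Suppose $\bar{y}$ satisfies $c - \stdMap^T\bar{y} \in \widetilde{\stdCone}$ with $b^T\bar{y}$ close to (or exceeding) $\dOptRed$. The key observation is that $a \in L$ means $a = -\stdMap^Ty^1$ for some $y^1$ with $b^Ty^1 = 0$ (since $L \subseteq \{\stdMap^Ty \mid \inProd{b}{y}=0\}$), and $a \in \stdCone$. Consider the one-parameter family $y(t) = \bar{y} + t y^1$; then $c - \stdMap^Ty(t) = (c - \stdMap^T\bar{y}) - t\,\stdMap^Ty^1 = (c - \stdMap^T\bar{y}) + ta$, and $b^Ty(t) = b^T\bar{y}$ is unchanged. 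Now I would invoke Lemma \ref{lemma_recession} block by block: on the blocks indexed by $\mathcal{H}_1(a)$, where $\widetilde{\stdCone}^{n_i} = \Re^{n_i}$ and $a_{n_i} \in \reInt\SOC{n_i}$, adding $ta_{n_i}$ eventually lands in $\interior \SOC{n_i}$ regardless of the starting point; on the blocks indexed by $\mathcal{H}_2(a)$, where $\widetilde{\stdCone}^{n_i} = H_d^{n_i}$ with $d = a_{n_i}'$, the constraint $(c-\stdMap^T\bar{y})_{n_i} \in H_d^{n_i}$ says exactly $(c-\stdMap^T\bar{y})_{n_i}^T a_{n_i}' \geq 0$; combined with strong feasibility of \eqref{eq:dual} one can perturb $\bar{y}$ slightly towards the Slater point to make this inner product strictly positive while moving $b^Ty$ by an arbitrarily small amount, and then Lemma \ref{lemma_recession} gives $(c-\stdMap^Ty(t))_{n_i} \in \interior\SOC{n_i}$ for large $t$. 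The untouched blocks stay where they are. So for a suitable perturbation and large $t$, $y(t)$ is feasible for \eqref{eq:dual} with objective value arbitrarily close to $b^T\bar{y}$, hence $\dOpt \geq b^T\bar{y} - \varepsilon$ for all $\varepsilon > 0$; taking the supremum over $\bar{y}$ gives $\dOpt \geq \dOptRed$.

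The step I expect to be the main obstacle is handling the $\mathcal{H}_2(a)$ blocks cleanly: for those we only know $(c-\stdMap^T\bar{y})_{n_i}^T a_{n_i}' \geq 0$, not strict positivity, so adding $ta$ alone does not push the block into the interior of the Lorentz cone. The fix is to first take a tiny convex combination of $\bar{y}$ with the Slater point $y^S$ (where $c - \stdMap^Ty^S \in \reInt\stdCone$, so in particular $(c-\stdMap^Ty^S)_{n_i}^T a_{n_i}' > 0$ for every such $i$): for small $\lambda > 0$ the point $y_\lambda = (1-\lambda)\bar{y} + \lambda y^S$ has $(c-\stdMap^Ty_\lambda)_{n_i}^T a_{n_i}' > 0$ on all $\mathcal{H}_2(a)$ blocks, keeps the $\mathcal{H}_1(a)$ and untouched blocks inside the respective cones (by convexity, since both endpoints satisfy the relaxed constraint — or rather one should note $\widetilde{\stdCone}$ is convex and $\reInt\stdCone \subseteq \widetilde{\stdCone}$), and changes the objective by at most $\lambda |b^T(y^S - \bar{y})|$, which is $O(\lambda)$. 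Then Lemma \ref{lemma_recession} applies to $y_\lambda + ty^1$. One should double-check that once $t$ is large the $\mathcal{H}_1(a)$ blocks and the untouched blocks remain feasible (the former because $a_{n_i}\in\reInt\SOC{n_i}$ is itself a feasible recession direction, the latter because $a_{n_i} = 0$ there since $i \notin \mathcal{H}_1(a)\cup\mathcal{H}_2(a)$ and $a \in \stdCone$ forces the relevant coordinates of $a$ to vanish — actually $a_{n_i}$ need not be zero on non-Lorentz blocks, but those blocks of $\stdCone$ are polyhedral and $a_{n_i}$ lies in them, so adding $t a_{n_i}$ keeps them feasible). Assembling these block-wise observations yields a feasible $y(t)$ for \eqref{eq:dual} with objective within $2\varepsilon$ of $b^T\bar{y}$, completing the proof.
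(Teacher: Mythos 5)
Your proof is correct and follows essentially the same route as the paper: after the trivial inequality $\dOpt \leq \dOptRed$, both arguments take a near-optimal point of the relaxed problem, mix it with a Slater point to gain strict positivity against $a'$ on the relevant blocks (the paper phrases this as strong feasibility of the relaxed problem, via Theorem \ref{theorem_reduction}, giving $s_\mu \in \reInt \widetilde{\stdCone}$), and then add a large multiple of $a=-\stdMap^Ty^1$ with $b^Ty^1=0$, invoking Lemma \ref{lemma_recession} to re-enter $\stdCone$ without changing the objective beyond an arbitrarily small perturbation. Your block-by-block verification, including the treatment of the untouched polyhedral blocks, is just a more explicit rendering of the same argument.
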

\begin{proof}
As $\widetilde \stdCone \supseteq K$, we have $\dOpt \leq \dOptRed$. We will now show 
that $\dOpt \geq \dOptRed$ holds as well. The first observation is that
Theorem \ref{theorem_reduction} ensures that $(\widetilde \stdCone, \matRange \stdMap^T, c) $ is 
strongly feasible as well.  This, together with the definition of $\dOptRed$,
implies that for every  $\mu <  \dOptRed$, there is 
$y_\mu$ such that $s_\mu = c-\stdMap^Ty_\mu \in \reInt \widetilde K$ and 
$\inProd{b}{y_\mu} \geq \mu$.

Because $s_\mu$ is a relative interior point of $\widetilde K$, if $\alpha _1$ 
is positive and sufficiently large, it will be the case that $s_\mu + \alpha _1 a^1 \in \reInt \stdCone$, by 
Lemma \ref{lemma_recession}. As $s_\mu + \alpha _1 a^1 =  c-\stdMap^T(y_\mu + \alpha _1 y^1)$ and 
$\inProd{b}{y^1} = 0$, we conclude that $y_\mu + \alpha _1 y^1$ is a feasible solution 
for \eqref{eq:dual} whose value is at least $\mu$. This readily shows that 
$\dOptRed = \dOpt$.

\end{proof}

Under the conditions of Lemma \ref{lemma:preserv}, it is possible that the optimal 
value of the relaxed problem is attained even if $\dOpt$ is not attained for \eqref{eq:dual}. 
We will  now show that if former is attained, then it is possible to construct solutions close to optimality
for the latter in a natural way. The recipe is as follows. Suppose that $y^*$ is an 
optimal solution for $\sup \{\inProd{b}{y} \mid c- \stdMap ^T y \in \widetilde \stdCone \}$ and let 
$\hat y$ be \emph{any} point such that $c- \stdMap ^T \hat y \in \reInt \widetilde \stdCone$. For every 
$\beta \in [0,1)$, the point $s_\beta = c- \stdMap ^T ((1-\beta) \hat y + \beta y^*)$ lies in 
the relative interior of $\widetilde \stdCone$. Following the proof of Lemma \ref{lemma:preserv}, for 
fixed $\beta$, there will be some $\alpha^1$ such that $s_\beta + \alpha _1 a^1 \in \stdCone$.
Note that $s_\beta + \alpha _1 a^1$ corresponds to a feasible solution having value 
$\inProd{b}{(1-\beta) \hat y + \beta y^*}$. As $\beta$ goes to $1$, $s_\beta + \alpha _1 a^1$ approaches 
optimality, at the cost of, perhaps, making $\alpha ^1$ large. The next step is to 
show that if we keep relaxing the problem, we will must eventually reach some problem 
whose optimal value is attained if $\dOpt$ is finite.

\begin{theorem}\label{theo:attainment}
Suppose that \eqref{eq:dual} is strongly feasible. Let $L = (\matRange \stdMap ^T) \cap \{ \stdMap^T y \mid b^Ty = 0 \}$ and 
consider a maximal relaxation sequence for $(\stdCone ,L,c)$. 
Let $\stdCone _\gamma$ be the cone corresponding to the last subproblem. Consider the following SOCP in dual format.
\begin{align}
\underset{y}{\sup} & \quad b^Ty \label{eq:dual_red} \tag{D'} \\ 
\mbox{subject to} & \quad c - \stdMap ^Ty \in K_\gamma \nonumber,
\end{align}
The following properties hold.
\begin{enumerate}[$i.$]
\item \eqref{eq:dual_red} has  a relative interior feasible solution, so the corresponding primal problem 
(P') is attained.
\item The optimal value $\dOptAlt$ of \eqref{eq:dual_red}  satisfies $\dOptAlt = \dOpt$.  If 
$\dOptAlt$ is finite, then it is attained. 
\end{enumerate}
\end{theorem}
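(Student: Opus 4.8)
The plan is to prove the two items by combining Lemma \ref{lemma:preserv} with an induction along the maximal relaxation sequence. First I would establish item $i.$: since the relaxation sequence is maximal and each $\stdCone_i$ is an extended second order cone, Proposition \ref{prop_last_problem} applied to $(\stdCone,L,c)$ tells us that $(\stdCone_\gamma,L,c)$ has the ``mostly same'' status as $(\stdCone,L,c)$; but $(\stdCone,L,c)$ is strongly feasible (it inherits strong feasibility from the strong feasibility of \eqref{eq:dual}, because $c-\stdMap^Ty \in \reInt \stdCone$ for some $y$ gives a point of $(L+c)\cap\reInt\stdCone$ — note $\matRange\stdMap^T \supseteq L$, so one must be slightly careful, but a relative interior point of $\stdCone$ in $L+c$ is obtained, e.g., by projecting out the component of $\stdMap^Ty$ along $b$-pairing). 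Hence $(\stdCone_\gamma, L, c)$ is strongly feasible, which for the problem in the form \eqref{eq:dual_red} exactly means there is $y$ with $c-\stdMap^Ty \in \reInt \stdCone_\gamma$, i.e., \eqref{eq:dual_red} has a relative interior feasible solution. Then the standard conic duality theorem (strong feasibility of the dual implies the primal optimal value is attained and equals the dual optimal value) gives attainment of (P'). I would state this appeal to strong duality explicitly, since it is the engine behind attainment.

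For item $ii.$, the equality $\dOptAlt = \dOpt$ would be proven by induction on the length of the relaxation sequence, using Lemma \ref{lemma:preserv} as the inductive step. The subtlety is that Lemma \ref{lemma:preserv} is stated for a single application of Theorem \ref{theorem_reduction} using a reducing direction $a = -\stdMap^Ty^1 \in L \cap \stdCone$; to iterate it I need each successive reducing direction $d^{i}$ to be expressible as $-\stdMap^T y^{i}$ with $b^Ty^{i}=0$, which is guaranteed because, by construction, $d^{i}\in \stdCone_i \cap L$ and $L = (\matRange\stdMap^T)\cap\{\stdMap^Ty \mid b^Ty = 0\}$, so any element of $L$ has this form. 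At each step the hypotheses of Lemma \ref{lemma:preserv} (strong feasibility of the current relaxed dual) are maintained by Theorem \ref{theorem_reduction}$(i)$. Chaining the equalities $\dOpt = \dOptRed_1 = \dOptRed_2 = \cdots = \dOptAlt$ gives the claim. Finally, ``$\dOptAlt$ finite implies attained'' follows from item $i.$: \eqref{eq:dual_red} is strongly feasible, so if its value is finite, conic strong duality again yields that it is attained (equivalently, the value is attained because the primal (P') is feasible with finite value and the dual satisfies Slater, so there is no duality gap and both sides attain).

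The main obstacle I expect is the bookkeeping to legitimately iterate Lemma \ref{lemma:preserv}: one must check that the ``working subspace'' along which reductions happen stays $L$ (rather than all of $\matRange\stdMap^T$), that every reducing direction really is of the form $-\stdMap^Ty^i$ with $b^Ty^i = 0$ so that adding multiples of it corresponds to a genuine change of dual variable that does not alter the objective value $b^Ty$, and that strong feasibility is preserved at every stage so Lemma \ref{lemma:preserv} keeps applying. Once these are in place, the argument is a clean induction plus one invocation of conic strong duality; no new estimates are needed beyond those already encapsulated in Lemma \ref{lemma_recession}, Theorem \ref{theorem_reduction}, Proposition \ref{prop_last_problem}, and Lemma \ref{lemma:preserv}.
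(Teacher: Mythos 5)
Your argument for $\dOptAlt=\dOpt$ (induction along the relaxation sequence, checking that each reducing direction lies in $L$ and hence has the form $-\stdMap^T y^i$ with $b^Ty^i=0$, with strong feasibility preserved by Theorem \ref{theorem_reduction}) is fine and matches the paper. The genuine gap is in the last claim of item $ii.$: you deduce that a finite $\dOptAlt$ is \emph{attained} by \eqref{eq:dual_red} from ``conic strong duality, since \eqref{eq:dual_red} satisfies Slater.'' That is not what conic duality gives: a relative interior (Slater) point of \eqref{eq:dual_red} guarantees zero gap and attainment of the \emph{primal} (P'), exactly as you use in item $i.$, but it says nothing about attainment of \eqref{eq:dual_red} itself. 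Indeed, if ``dual Slater $+$ finite value $\Rightarrow$ dual attainment'' were true, it would apply directly to \eqref{eq:dual} and the whole construction would be unnecessary — the section exists precisely because strongly feasible problems with finite value can fail to attain. Your proof also never uses maximality of the relaxation sequence for this step, which is a telltale sign: maximality is what makes attainment work. The paper's argument is: assuming non-attainment, the affine set $\mathcal{F}$ of slack vectors at objective level $\dOptAlt$ is nonempty, at zero distance from $\stdCone_\gamma$, and disjoint from it; maximality forces every element of $L\cap\stdCone_\gamma$ to have its nonzero part in the polyhedral blocks of $\stdCone_\gamma$, so the hypothesis \eqref{eq_prop_lineality_separation} of Proposition \ref{prop_strong_separation} holds and $\mathcal{F}$ and $\stdCone_\gamma$ can be strongly separated, contradicting zero distance. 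Some argument of this kind (exploiting that the nonpolyhedral blocks have been ``relaxed away'' along $L$) is indispensable.

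A secondary misstep: in item $i.$ you claim $(\stdCone,L,c)$ is strongly feasible because \eqref{eq:dual} is, with a hand-wave about ``projecting out the component along $b$.'' This inference is false in general: strong feasibility of \eqref{eq:dual} gives a point of $(c+\matRange\stdMap^T)\cap\reInt\stdCone$, but $L+c$ is the smaller slice corresponding to objective value zero, and it may miss $\reInt\stdCone$ (or even $\stdCone$) entirely — e.g.\ $n=1$, $K=\SOC{1}$, $c=0$, $\stdMap=b=1$. Fortunately item $i.$ does not need this: the reducing directions lie in $L\subseteq\matRange\stdMap^T$, so Theorem \ref{theorem_reduction}$(i)$ applied with the subspace $\matRange\stdMap^T$ (as in the proof of Lemma \ref{lemma:preserv}) shows a Slater point of \eqref{eq:dual} remains a relative interior feasible point of \eqref{eq:dual_red}; the route through Proposition \ref{prop_last_problem} applied to $(\stdCone,L,c)$ should be dropped.
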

\begin{proof}
Successive applications of Theorem \ref{theorem_reduction} yield the first item. Using Lemma 
\ref{lemma:preserv} and induction, we can also conclude that $\dOptAlt = \dOpt$. We now 
suppose that $\dOptAlt$ is finite and assume, for the sake of contradiction, that the optimal 
value is not attained for \eqref{eq:dual_red}.

We first observe that the affine space $\mathcal{F} = (c + \matRange{\stdMap^T})\cap(\{ \stdMap^T y \mid b^Ty = \dOptAlt \} $
is non-empty. Also, the Euclidean distance between $\mathcal{F}$ and $K_\gamma$ must be zero. Both observations 
follow from the definition of $\dOptAlt$, which ensures the existence of a sequence $\{y^k \}$ contained in $(c + \matRange{\stdMap^T})\cap K_\gamma$
which satisfies $b^T{y^k} \to \dOptAlt$.

Having observed that, we will proceed as in the proof of item $ii$ of Proposition \ref{prop_last_problem}.
Since there is no optimal solution for \eqref{eq:dual_red}, we have $\mathcal{F} \cap \stdCone_\gamma = \emptyset$. Reordering 
the coordinates if necessary, we can write $\stdCone_\gamma$ as the direct product of a closed convex 
cone $\widetilde \stdCone$ and a polyhedral cone $\widetilde P$. Note that the recession cone of $\mathcal{F}$ is 
$L$ and the recession cone of $\stdCone_\gamma$ is $\stdCone_\gamma$. By the maximality of the relaxation sequence, the 
nonzero part of any  point in the intersection $L\cap \stdCone_\gamma$ must be contained in the polyhedral 
portion $\widetilde P$. Therefore, Equation \eqref{eq_prop_lineality_separation} is satisfied and 
Proposition \ref{prop_strong_separation} ensures that   $\mathcal{F}$ and  $\stdCone_\gamma$ can 
be strongly separated. Thus the Euclidean distance between these two sets must be positive, which 
is a contradiction. 

\end{proof}

We can now extend the recipe discussed after Lemma \ref{lemma:preserv}.
Let $\{a^1, \ldots, a^\gamma\}$ the corresponding reducing directions which 
produces the cone $\stdCone_\gamma$. Pick an optimal solution $y^*$ for \eqref{eq:dual_red} and 
let $\hat y$ be any solution such that $c- \stdMap^T\hat y \in \reInt \stdCone_\gamma$. By induction, 
for a fixed $\beta \in [0,1)$ there are positive constants $\alpha_1, \ldots, \alpha _\gamma$ such that $z_\beta = c - \stdMap^T ((1-\beta) \hat y + \beta y^*) + \sum _{i=1}^\gamma 
\alpha _1 a^i$ corresponds to a feasible solution to \eqref{eq:dual} having value 
equal to $b^T{((1-\beta) \hat y + \beta y^*)}$. As before, when $\beta$ approaches $1$, the value 
of $z_\beta$ approaches $\dOpt$. This shows very clearly how the 
reducing directions can be used to construct a path towards optimality.
Moreover, the discussion on Section \ref{sec:min_d} shows that at most $m$ directions 
are needed to build points close to optimality, where $m$ is the number of Lorentz cones.

To wrap up this section, we remark that we proved similar results for SDP in Section 5 of \cite{lourenco_muramatsu_tsuchiya3}, where 
it is shown how to obtain a pair of primal and problems such that \emph{both} are strongly feasible. Therefore, 
a key difference here is that Theorem \ref{theo:attainment} does not ensure that (P') has a relative interior point. 
However, this is not necessary to ensure attainment, 
due to Proposition \ref{prop_strong_separation} and the fact that we have a mixture of nonlinear and polyhedral cones.

\section{Determining the Feasibility Status}\label{sec:back}
Let $(\stdCone,L,c)$ be an arbitrary SOCFP. According to 
Theorem \ref{theorem_reduction} the feasibility status of $(\stdCone,L,c)$ and 
the last problem $(\stdCone_\gamma, L,c)$ is exactly the same, except, perhaps, if 
$(\stdCone,L,c)$ is weakly infeasible. As mentioned in Section \ref{sec:notation}, there 
are simple (finite) certificates for three of the feasibility statuses and these 
are exactly the three statuses that are possible for $(\stdCone_\gamma, L,c)$. 

Therefore, to determine the feasibility status of $(\stdCone,L,c)$, one can first 
compute a relaxation sequence of $(\stdCone,L,c)$ and seek for appropriate certificates 
for $(\stdCone_\gamma, L,c)$. If $(\stdCone_\gamma, L,c)$ is weakly feasible then we have ahead of 
ourselves the task of distinguishing between weak feasibility and weak infeasibility of $(\stdCone,L,c)$.
To do that, it is enough to produce a finite certificate of infeasibility for $(\stdCone,L,c)$, 
which can be done through facial reduction \cite{Borwein1981495, borwein_facial_1981,pataki_strong_2013,article_waki_muramatsu} as follows.

The variant in \cite{article_waki_muramatsu} is capable of detecting infeasibility. 
 Starting with $F_0 = \stdCone$, facial reduction algorithms (FRAs) 
successively identify elements $d^i \in (F_{i-1}^* \setminus F_{i-i}^\perp )\cap L^\perp$ satisfying 
$\inProd{d^i}{c} \leq 0$. After $d^i$ is found, we define $F_i = F_{i-1} \cap \{d^i\}^\perp$ and 
repeat. At any step, if no $d^i$ exists, then the minimal face of $\stdCone$ which contains the 
feasible region is precisely $F_{i-1}$. As the $F_i$ form a strictly descending chain of faces of $\stdCone$, 
dimensional considerations readily imply that FRA must end in a finite number of steps.
Moreover, it can be shown  that $(K,L,c)$ is infeasible if and only if 
$\inProd{d^i}{c} < 0$ at some iteration.  For more details, see Section 3 in \cite{article_waki_muramatsu}.
If $\stdCone$ is an extended second order cone, it is possible to show that the search for 
$d^i$ can be cast as a SOCP as well. The upshot is that we may do facial reduction without ever 
leaving the SOCP world and the vectors $d^i$ serve as witnesses of the infeasibility of $(K,L,c)$.  

The certificate coming from facial reduction, the set of reducing directions associated 
to a relaxation sequence and a feasible solution to the last problem neatly summarize all aspects of 
weak infeasibility. Note that Theorem \ref{theorem_directions} and its proof  show that the  
directions and a feasible solution to the last problem can be used to construct 
points in $L+c$ which are arbitrarily close to $\stdCone$. This fulfills the goal of producing a 
finite certificate that $\text{dist}(\stdCone,L+c) = 0$ in spite of the fact that $\stdCone \cap (L+c) = \emptyset$. 

\newcommand{\replacedCone}{\widetilde K_p}

\begin{example}
Let $(\stdCone, L,c)$ be as in Example \ref{example_relaxation}. The last problem 
obtained was $(H^{3}_{a_{n_1}'} \times H^{3}_{b_{n_2}'},L,c)$, which was a weakly 
feasible problem. We now have to find out whether $(\stdCone, L,c)$ is weakly infeasible 
or feasible. In this case, it is easy because the point $(s,s,1)$ can never belong to 
$\SOC{3}$. If the problem were more complicated, one could formally do facial reduction and check 
its infeasibility. Note that, following Theorem \ref{theorem_directions} the vectors $a,b$ and $(0,0,1)\times (0,0,1)$ attest 
that $\text{dist}(\SOC{3}\times \SOC{3}, L+c) = 0$.

\end{example}

\section{Conclusion}\label{sec:conc}

In this paper, we presented an analysis of weakly infeasible problems via relaxation sequences. 
We used that to prove a basic result on the existence of affine subspaces that preserve 
the weak infeasibility of the problem (Theorem \ref{theorem_directions}) and we showed 
how to regularize a strongly feasible problem in order to guarantee that it is attained.

\bibliographystyle{spmpsci} 
\bibliography{bib}
\end{document}